\numberwithin{equation}{section}
\title{A new approach for the Brown-Erd\H{o}s-S\'os problem}
\author{Asaf Shapira \thanks{School of Mathematics, Tel Aviv University, Tel Aviv 69978, Israel. Email: asafico$@$tau.ac.il. Supported in part by ERC Consolidator Grant 863438 and NSF-BSF Grant 20196.} \and
Mykhaylo Tyomkyn
	\thanks{Department of Applied Mathematics, Charles University. Email: tyomkyn$@$kam.mff.cuni.cz.
Supported in part by ERC Synergy Grant DYNASNET 810115 and GA\v{C}R Grant 22-19073S.}}
\date{\today}
\theoremstyle{plain}
\newtheorem{theorem}{Theorem}[section]
\newtheorem{lemma}[theorem]{Lemma}
\newtheorem{claim}[theorem]{Claim}
\newtheorem{observation}[theorem]{Observation}
\newtheorem{conjecture}[theorem]{Conjecture}
\def\moverlay{\mathpalette\mov@rlay}
\def\mov@rlay#1#2{\leavevmode\vtop{%
   \baselineskip\z@skip \lineskiplimit-\maxdimen
   \ialign{\hfil$\m@th#1##$\hfil\cr#2\crcr}}}
\newcommand{\charfusion}[3][\mathord]{
    #1{\ifx#1\mathop\vphantom{#2}\fi
        \mathpalette\mov@rlay{#2\cr#3}
      }
    \ifx#1\mathop\expandafter\displaylimits\fi}
\renewenvironment{proof}[1][\proofname]
{\par\pushQED{\qed}
	\normalfont\topsep6\p@\@plus6\p@\relax\trivlist
	\item[\hskip\labelsep\bfseries#1\@addpunct{.}]
	\ignorespaces}
{\popQED\endtrivlist\@endpefalse}
\newcommand{\A}{\mathcal A}
\newcommand{\B}{\mathcal B}
\newcommand{\C}{\mathcal C}
\newcommand{\F}{\mathcal F}
\newcommand{\G}{\mathcal G}
\newcommand{\Z}{\mathcal Z}
\definecolor{RED}{rgb}{1,0,0}\definecolor{BLUE}{rgb}{0,0,1} 
\begin{document}
\date{}
\maketitle

\begin{abstract}

The celebrated Brown-Erd\H{o}s-S\'os conjecture
states that for every fixed $e$, every $3$-uniform hypergraph with $\Omega(n^2)$ edges contains
$e$ edges spanned by $e+3$ vertices. Up to this date all the approaches towards resolving this problem relied
on highly involved applications of the hypergraph regularity method, and yet they supplied only approximate
versions of the conjecture, producing $e$ edges spanned by $e+O(\log e/\log \log e)$ vertices.

In this short paper we describe a completely different approach, which reduces the problem to a variant of another well-known conjecture in extremal graph theory. A resolution of the latter would resolve the Brown-Erd\H{o}s-S\'os conjecture up to an absolute additive constant.

\end{abstract}

\section{Introduction}\label{sec:intro}

\subsection{Background and previous results}

Some of the most well studied problems in extremal combinatorics are those asking which objects are
guaranteed to appear in ``dense'' objects. Among notable examples are Roth's Theorem \cite{Roth} on $3$-term arithmetic progressions
in dense sets of integers, and the K\H{o}v\'ari-S\'os-Tur\'an Theorem \cite{KST} on bipartite subgraphs of dense graphs.
In this paper we consider a question raised by Brown, Erd\H{o}s and S\'os in 1973 \cite{BES,BES1}, which is one of the most famous open problems of this type.

Given an integer $e\geq 3$, one would expect a dense $3$-uniform hypergraph ($3$-graph for short) to contain $e$ edges spanned by a small number of vertices. To quantify this, let {\em $(v,e)$-configuration} denote a set of $e$ edges spanned by at most $v$ vertices. The Brown--Erd\H{o}s--S\'os Conjecture (BESC) states that for every fixed $e \geq 3$ and all large enough $n$, every $3$-graph with $\Omega(n^2)$
edges contains an $(e+3,e)$-configuration.
Despite a lot of effort over the past 50 years,
the BESC is only known
to hold for $e=3$, due to a result of Ruzsa and Szemer\'edi \cite{RSz}.

Since even the $e=4$ case of the BESC seems hopeless, it is natural to try to prove approximate versions of the conjecture, namely that $3$-graphs with $\Omega(n^2)$ edges contain $(e+f(e),e)$-configurations,
for some slowly growing function $f$. The first result of the above type was obtained by S\'{a}rk\"{o}zy and Selkow \cite{Sarkozy_Selkow} who showed that every $3$-graph
with $\Omega(n^2)$ edges contains for every fixed $e$ an $(e+2+\lfloor \log_2 e\rfloor,e)$-configuration. This was improved by Solymosi and Solymosi \cite{SolymosiSolymosi} for the special case $e=10$ from $15$ to $14$ vertices. A general asymptotic improvement of the result of $\cite{SolymosiSolymosi}$ was obtained recently by Conlon, Gishboliner, Levanzov and Shapira \cite{CGLS}, who proved the existence of $(e+O(\log e/\log\log e),e)$-configurations.

Besides its intrinsic interest, the BESC turned out to be one of the most influential problems in extremal combinatorics.
For example, the proof of the case $e=3$ \cite{RSz} was one of the first applications of Szemer\'edi's regularity lemma \cite{Sz},
and further introduced the famous graph removal lemma. One of the main motivations for the development of the celebrated hypergraph
regularity method \cite{Gowers,NRS,RS1,RS2,Tao} was the hope that it will lead to a resolution of BESC. While this did not materialize,
the hypergraph regularity method was instrumental in the latest works \cite{CGLS,SolymosiSolymosi}. However, although the above
proofs rely on highly involved applications of the hypergraph regularity method, it appears that the following natural approximate version of the BESC
is beyond their reach.

\begin{conjecture}[Constant deficiency BESC]\label{conj:weakBES}
There is an absolute constant $d$ so that for every $e$ and every large enough $n$, every $3$-graph with $\Omega(n^2)$
edges contains an $(e+d,e)$-configuration.
\end{conjecture}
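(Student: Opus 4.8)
The plan is to deduce Conjecture~\ref{conj:weakBES} from a purely graph-theoretic statement, by passing from $3$-graphs to ordinary graphs through the standard ``triangle'' encoding and then isolating the combinatorial core as a variant of a well-studied extremal problem. First I would reduce to the case of \emph{linear} $3$-graphs. Since only \emph{some} constant $d$ is required, fix $d=2$ and suppose $H$ is a $3$-graph on $n$ vertices with $cn^2$ edges containing no $(e+2,e)$-configuration. If some pair $\{x,y\}$ of vertices lay in $e$ edges of $H$, those $e$ edges would span only $e+2$ vertices --- an $(e+2,e)$-configuration --- so in fact every pair of vertices lies in at most $e-1$ edges. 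I would then pass to the ``conflict graph'' on the edge set of $H$, joining two edges when they share two vertices: by the codegree bound each edge conflicts with at most $3(e-2)$ others, so a greedy selection produces an independent set, that is, a linear subhypergraph $H'\subseteq H$, with $e(H')\ge e(H)/(3e-5)=\Omega(n^2)$ and still no $(e+2,e)$-configuration. Hence it suffices to prove the conjecture for linear $3$-graphs.

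Next I would translate the linear case into graph language. A linear $3$-graph on a vertex set $V$ is the same object as a graph $G$ on $V$ equipped with a decomposition of its edge set into pairwise edge-disjoint triangles: each hyperedge $\{x,y,z\}$ becomes the triangle $xyz$, and linearity is exactly the assertion that these triangles are pairwise edge-disjoint, so that the third vertex of the triangle sitting on any given edge of $G$ is uniquely determined. Under this dictionary, a $(v,e)$-configuration in the hypergraph corresponds precisely to a choice of $e$ of the decomposition triangles whose union spans at most $v$ vertices. Thus Conjecture~\ref{conj:weakBES} reduces to the following statement, which is what I would try to establish: \emph{there is an absolute constant $d$ such that for every fixed $e$ and all large $n$, every $n$-vertex graph that decomposes into $\Omega(n^2)$ pairwise edge-disjoint triangles contains $e$ of these triangles spanning at most $e+d$ vertices.}

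The heart of the matter --- and the main obstacle --- is this last statement. A set of $e$ edge-disjoint triangles spanning only $e+d$ vertices is a subgraph with $3e$ edges on roughly $e$ vertices: a bounded-size subgraph of average degree close to $6$, built entirely from decomposition triangles. One cannot find such a subgraph by averaging, since the $\Omega(n^2)$ triangles are spread over $n$ vertices, so a random bounded set of vertices spans no triangle at all with high probability; and it appears one cannot extract it through the hypergraph regularity method either --- which is exactly what confines the known bounds to deficiency $O(\log e/\log\log e)$, and already for the ``quasirandom'' examples (triangle decompositions of complete tripartite graphs, equivalently Latin squares) locating such a tight configuration is far from clear. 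My proposal is therefore to reduce this step, in turn, to a variant of a well-known conjecture in extremal graph theory asserting that every graph of positive edge-density contains a certain efficient bounded subconfiguration. The subtle point --- and the reason the argument terminates at a conjecture rather than a theorem --- is that one must pin the number of triangles down to \emph{exactly} $e$ while keeping the vertex count at $e+O(1)$: simply trimming a larger efficient configuration is useless, because deleting a triangle all of whose vertices are shared with other triangles removes no vertex at all. Establishing this graph-theoretic input is the crux, and it is precisely here that a genuinely new idea, going beyond regularity, is required.
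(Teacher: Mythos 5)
First, a point of framing: the statement you are asked to prove is a \emph{conjecture}, and the paper does not prove it unconditionally either --- it proves (Theorem~\ref{thm:main}) that it follows from a specific, precisely stated graph-theoretic conjecture (Conjecture~\ref{conj:weak}, a weakening of Conlon's conjecture on $2$-degenerate $C_4$-free bipartite graphs). Your preliminary reductions are fine and match the paper's starting point: the passage to linear $3$-graphs via the codegree/conflict-graph argument is the standard ``well known observation'' the paper invokes, and the dictionary between linear $3$-graphs and edge-disjoint triangle decompositions is correct. The problem is that everything after that is a description of the difficulty rather than a reduction. You say the crux is ``a variant of a well-known conjecture asserting that every graph of positive edge-density contains a certain efficient bounded subconfiguration,'' but you never say which conjecture, and the triangle-decomposition framing keeps you at constant edge-density on $n$ vertices, which is not where the paper goes.

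The paper's actual content, which your proposal is missing, consists of two steps. First, the Solymosi--Solymosi-style auxiliary graph: vertices are \emph{pairs} $\binom{\A}{2}\cup\binom{\B}{2}$, and two pairs are joined when a common vertex $c\in\C$ certifies two hyperedges through them. This converts $\Omega(n^2)$ hyperedges on $n$ vertices into a graph with $N=O(n^2)$ vertices and $\Omega(N^{3/2})$ edges, which is exactly the density threshold of the degenerate bipartite Tur\'an problem; the target conjecture then becomes: such a graph contains some $2$-degenerate $k$-vertex subgraph with $2k-t$ edges. Second, the unpacking argument (Lemma~\ref{lem:key}): the difficulty you correctly flag --- pinning the hyperedge count to exactly $e$ while keeping $e+O(1)$ vertices, since trimming an efficient configuration can fail to remove vertices --- is resolved there by a dichotomy: the unpacked configuration either has almost the maximal $4k$ hyperedges (so one pads with $O_t(1)$ arbitrary hyperedges), or it has at least as many hyperedges as vertices, in which case one deletes it from $\G$ and recurses on the remaining dense $3$-graph. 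Without the auxiliary-graph construction, the identification of the target conjecture, and this dichotomy-plus-iteration, your proposal does not constitute a reduction of Conjecture~\ref{conj:weakBES} to anything checkable.
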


\subsection{A new approach for Conjecture \ref{conj:weakBES}}

Our aim in this paper is to reduce Conjecture \ref{conj:weakBES} to a problem involving graphs.
Let us denote by $\text{ex}(n,H)$ the maximum number of edges in an $n$ vertex graph not containing
a copy of $H$ as a subgraph. The K\H{o}v\'ari-S\'os-Tur\'an Theorem \cite{KST} which we mentioned above,
states that for every fixed $t \leq s$, we have $\text{ex}(n,K_{s,t})=O(n^{2-1/t})$ where $K_{s,t}$
is the complete bipartite graph with parts of size $t$ and $s$.
This bound is known to be tight for large $s$, see \cite{Bukh} for recent progress and references.
One of the main research directions in extremal graph theory is to obtain better bounds for sparser bipartite graphs. One such problem
was raised by Erd\H{o}s \cite{Erdos}, who conjectured that if $H$ is a $t$-degenerate bipartite graph then
$\text{ex}(n,H)=O(n^{2-1/t})$. While there are some approximate results towards this conjecture \cite{AKS,Furedi, GrzesikJanzerNagy, Janzer},
the question is open even for $t=2$. Note that in general, the conjectured bound $O(n^{2-1/t})$ for $t$-degenerate
bipartite graphs cannot be improved since the aforementioned $K_{s,t}$ is $t$-degenerate. In particular,
the bound is tight for every $t$-degenerate $H$ which contains a copy of $K_{s,t}$.
In light of this, Conlon \cite{Conlon} conjectured that if we assume that a $t$-degenerate bipartite graph $H$ has no $K_{t,t}$ then we have $\text{ex}(n,H)=O(n^{2-1/t-\delta})$ for some $\delta=\delta(H)>0$. Lending plausibility to this conjecture, Sudakov and Tomon~\cite{ST} showed that if all vertices in one of the parts of $H$ have degree at most $t$ but $H$ has no $K_{t,t}$ then $\text{ex}(n,H)=o(n^{2-1/t})$.
For $t=2$ Conlon's conjecture can be stated as:
\begin{conjecture}[Conlon~\cite{Conlon}]\label{conj:conlon}
For every $2$-degenerate $C_4$-free bipartite graph $H$ there exists a constant $\delta=\delta(H)>0$ such that $$\text{ex}(n,H)=O(n^{3/2-\delta})\;.$$
\end{conjecture}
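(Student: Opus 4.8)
The plan is to recast the question as an embedding problem and reduce it, in stages, to the already-understood case of \emph{subdivisions}; the genuinely open part is then a power-saving supersaturation estimate in the tightest range, which I identify below as the main obstacle.

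\textit{Step 1: reductions.} Suppose $G$ has $n$ vertices and at least $Cn^{3/2-\delta}$ edges; we must exhibit a copy of $H$. By the standard regularisation argument we may pass to a subgraph $G'$ in which every degree is within an absolute constant factor of $D:=\Theta(n^{1/2-\delta})$, while keeping $\Theta(n^{3/2-\delta})$ edges. Let $H_0$ be the $2$-core of $H$, obtained by repeatedly deleting vertices of degree at most $1$; since $G'$ has minimum degree $D\gg|V(H)|$, any embedding of $H_0$ into $G'$ extends --- by greedily re-attaching the stripped leaves one at a time --- to an embedding of $H$, so it suffices to show $\mathrm{ex}(n,H_0)=O(n^{3/2-\delta})$. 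Now $H_0$ has minimum degree at least $2$ and, being $2$-degenerate, has each component either an even cycle of length at least $6$ (since $H$ is $C_4$-free) or the subdivision of a connected multigraph $M$ of minimum degree at least $3$ (obtained by suppressing the degree-$2$ vertices). The cycle components are handled by the Bondy--Simonovits bound $\mathrm{ex}(n,C_{2k})=O(n^{1+1/k})$ with $k\ge 3$, so the heart of the matter is embedding the subdivision of $M$. Here $C_4$-freeness translates into mild girth-type constraints on $M$: no two branch vertices are joined by two subdivided paths of total length $4$, and no pendant cycle has length $4$.

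\textit{Step 2: embedding the subdivision of $M$ path by path.} Order the branch vertices and edges of $M$ so that $M$ is built up by repeatedly either (a) adding a new branch vertex joined to the current structure by a single subdivided path, or (b) joining two existing branch vertices by a new subdivided path of a prescribed length. Embed $H_0$ into $G'$ in this order. Move (a) is cheap: an almost-$D$-regular graph contains $\Omega(nD^{\ell})$ homomorphic $\ell$-paths for each fixed $\ell$, and all but a $o(1)$-fraction are genuine paths avoiding any prescribed constant-sized set, so there is ample room. The real difficulty is move (b): connect two already-embedded branch vertices, mapped to $x,y\in V(G')$, by a path of prescribed length $\ell\ge 2$ internally disjoint from the constantly many used vertices. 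On average this is impossible --- when $e(G)=n^{3/2-\delta}$ the codegree of a typical pair is $n^{-\Theta(\delta)}\ll 1$, and even the number of $\ell$-paths between a typical pair is far below $1$ --- so one must exploit structure: show that the set of pairs joined by \emph{too few} $\ell$-paths has density at most $n^{-\delta'}$, by charging each such ``deficient'' pair to a short obstruction (a cycle of length at most $2\ell$, or a related configuration) which is scarce because $G$ lies only slightly below the K\H{o}v\'ari--S\'os--Tur\'an threshold and $H_0$ is $C_4$-free; one can then route all $O(e(M))$ required paths simultaneously. Tracking how the saving $\delta(H)>0$ survives the $O(e(M))$ rounds would complete the embedding, and hence prove Conjecture~\ref{conj:conlon}.

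\textit{Where the real work lies.} The bottleneck is move (b) in the \emph{shortest} range: $M$ subdivided just once, so that $H_0$ is the once-subdivided copy of a graph of minimum degree at least $3$ and the paths to be routed have length exactly $2$. There $G$ may legitimately resemble the incidence graph of a generalised quadrangle (or another K\H{o}v\'ari--S\'os--Tur\'an-extremal $C_4$-free graph): it has $\Theta(n^{3/2})$ edges, contains no $C_4$ at all, yet has a rigid local geometry, and extracting a \emph{power} improvement $n^{3/2-\delta}$ from it is exactly what the available techniques cannot do --- they cover long or ``balanced'' subdivisions (Conlon--Lee, Janzer), complete-graph subdivisions, and generalised theta graphs (Faudree--Simonovits), and yield $o(n^{3/2})$ but not a power saving in the bounded-degree-part case (Sudakov--Tomon). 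A secondary obstacle is the reduction in Step~1 when $M$ is a genuine multigraph with parallel paths of mixed, short lengths, where the ``short obstruction'' count is itself hard to beat. I expect that a successful attack will replace the round-by-round embedding above with a more global argument --- for instance an entropy- or random-sparsification-based comparison between the number of homomorphic copies of $H_0$ in $G$ and the number of obstructing short substructures --- but the tight once-subdivided case is where the genuine difficulty sits.
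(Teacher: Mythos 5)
You have not proved this statement, and indeed the paper does not either: Conjecture~\ref{conj:conlon} is stated as an open conjecture (attributed to Conlon via private communication), and the paper's contribution is only to formulate a weaker variant (Conjecture~\ref{conj:weak}) and prove that the weaker variant implies the constant-deficiency Brown--Erd\H{o}s--S\'os conjecture. Your text is an outline of a possible attack, and by your own account the decisive step is missing: the ``power-saving supersaturation'' needed for move (b), i.e.\ showing that the set of pairs of embedded branch vertices joined by too few paths of prescribed length has density at most $n^{-\delta'}$, is exactly the open content of the conjecture, and you explicitly defer it (``Tracking how the saving $\delta(H)>0$ survives \dots would complete the embedding''). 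In the tightest case you single out --- $H_0$ a once-subdivided graph of minimum degree $3$, so the paths to be routed have length $2$ --- the charging argument you propose cannot work as stated: in a K\H{o}v\'ari--S\'os--Tur\'an-extremal $C_4$-free graph (e.g.\ an incidence graph of a generalised quadrangle) every pair of vertices has codegree at most $1$, so essentially \emph{all} pairs are ``deficient'' for length-$2$ paths while there are no short obstructions ($C_4$'s) to count at all; scarcity of obstructions therefore gives no leverage, which is precisely why the known results (Conlon--Lee, Janzer, Sudakov--Tomon) stop at $o(n^{3/2})$ or at special families rather than a power saving.

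There are also smaller soft spots in Step 1 that would need care even granting the main step: the regularisation to an almost-regular subgraph changes $n$, so the exponent saving must be tracked through that reduction; the greedy re-attachment of leaves needs the embedding of $H_0$ to avoid only $O(1)$ vertices, which is fine, but the reduction from $H$ to its $2$-core silently discards the tree parts' interaction with $C_4$-freeness (harmless, but should be said); and the decomposition of $H_0$ into cycle components and subdivisions of multigraphs $M$ of minimum degree $3$ must handle the case where $M$ has parallel edges subdivided a different number of times, which you acknowledge only in passing. None of these is fatal, but the proposal as written is a research programme, not a proof, and the central lemma it requires is open.
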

\noindent
There are several results supporting Conjecture \ref{conj:conlon}. For example, Conlon and Lee~\cite{CL} proved that
if $H$ is a bipartite graph so that each vertex in one of $H$'s sides has maximum degree $2$ (such a graph is clearly $2$-degenerate)
and $H$ is $C_4$-free then $\text{ex}(n,H)=O(n^{3/2-\delta})$ for some $\delta=\delta(H)>0$. Further results in this direction were obtained in \cite{ConlonJanzerLee, Janzer1}.

Let $\mathcal{H}_{k,t}$ be the family of $2$-degenerate graphs on $k$ vertices and $2k-t$ edges.
We raise the following weaker version of Conjecture \ref{conj:conlon}.

\begin{conjecture}\label{conj:weak}
There are absolute constants $t,k_0$ such that for every $k \geq k_0$ and large enough $n$,
every graph with $\Omega(n^{3/2})$ edges contains
a copy of {\bf some} $H \in \mathcal{H}_{k,t}$.
\end{conjecture}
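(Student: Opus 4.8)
The plan is to recast the statement as a greedy \emph{build-up} problem and then to isolate the one regime that resists it. First, a standard argument lets us assume the host $G$ has minimum degree $\Omega(\sqrt n)$: pass to a subgraph $G'$ whose minimum degree is at least half the average degree of $G$, hence $\Omega(\sqrt n)=\Omega(\sqrt{|V(G')|})$, and replace $G$ by $G'$. Next, to produce a member of $\mathcal H_{k,t}$ it suffices to find $k$ vertices of $G$ admitting an ordering $v_1,\dots,v_k$ such that all but an absolutely bounded number of the $v_i$ have at least two neighbours among $\{v_1,\dots,v_{i-1}\}$: keeping at most two such ``back-edges'' at each vertex produces a spanning $2$-degenerate subgraph on these $k$ vertices, its number of edges is $2k$ minus a bounded quantity, and deleting edges down to exactly $2k-t$ preserves $2$-degeneracy. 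The bound on the number of deficient vertices is precisely what pins down the absolute constant $t$, so the task becomes to find $k$ vertices with such a build-up ordering.

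Running the build-up is immediate whenever $G$ contains a large complete bipartite graph $K_{2,\ell}$: its $\ell=k-2$ degree-two vertices give an ordering with a single deficient step, so $G\supseteq K_{2,k-2}\in\mathcal H_{k,4}$, and minor variants (a ``book'', or a theta graph with one path of length three) handle nearby configurations. By a routine cherry count, a graph of minimum degree $\Omega(\sqrt n)$ with no such $K_{2,\ell}$ must --- after deleting a few edges --- be ``quasi-$C_4$-free'': all codegrees bounded, roughly regular of degree $\Theta(\sqrt n)$, of bounded diameter, and locally tree-like, with the ball of radius two around almost every vertex of size $\Omega(n)$. In this sparse regime the plan is to run the build-up while maintaining two ``sides'' $X$ and $Y$, always adding a new vertex with two earlier neighbours that lie far apart in the current graph --- so that no short cycle and no second common neighbour is created --- and alternating the growth of $X$ and $Y$ so that fresh far-apart pairs keep appearing, using the abundance of internally disjoint paths supplied by the high minimum degree to furnish the vertices.

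The step I expect to be the main obstacle, and the reason this is posed as a conjecture rather than a theorem, is precisely sustaining the build-up in the sparse regime. There the target subgraph has average degree close to four, which in a $C_4$-free host cannot be realised by a ``path-plus-chords'' structure but only by a genuinely two-dimensional, incidence-geometry-like configuration; guaranteeing that such a configuration embeds --- with $X$ and $Y$ growing in a balanced way and never running out of fresh far-apart pairs --- is exactly the content of a weaker variant of Conjecture~\ref{conj:conlon}, and the known partial results do not reach it: a short computation shows that no member of $\mathcal H_{k,t}$ with $k$ large can simultaneously be bipartite, be $C_4$-free, and have a side of bounded maximum degree, so the theorems of Conlon--Lee~\cite{CL} and Sudakov--Tomon~\cite{ST} say nothing here. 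The hope underlying Conjecture~\ref{conj:weak} is that the freedom to realise \emph{any} member of the rich family $\mathcal H_{k,t}$, rather than a prescribed graph, makes the sparse regime tractable --- perhaps by a counting or dependent random choice argument that exploits this freedom --- but carrying this out remains open.
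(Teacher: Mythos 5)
The statement you were asked to prove is Conjecture~\ref{conj:weak} of the paper: it is posed there as an open problem, the paper contains no proof of it, and your proposal does not supply one either --- you say so yourself in the final sentence. So there is nothing to compare against, and the honest verdict is that this is a strategy sketch with an explicitly acknowledged gap, not a proof.

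To be concrete about where the gap sits and whether the surrounding reductions are sound: the preliminary steps are reasonable. Passing to a subgraph of minimum degree $\Omega(\sqrt{n})$ is standard; the observation that an ordering $v_1,\dots,v_k$ in which all but a bounded number of vertices have at least two earlier neighbours yields (after trimming edges) a member of $\mathcal H_{k,t}$ for an absolute $t$ is correct; and $K_{2,k-2}$ does lie in $\mathcal H_{k,4}$, so the dense, large-codegree regime is genuinely easy. But the reduction to a ``quasi-$C_4$-free'' host is already looser than you state --- excluding $K_{2,\ell}$ bounds codegrees, yet minimum degree $\Omega(\sqrt n)$ gives no upper bound on degrees and no near-regularity, so claims like ``roughly regular of degree $\Theta(\sqrt n)$, bounded diameter, locally tree-like'' would each need an argument or a further cleaning step. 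More importantly, the entire content of the conjecture lives in exactly the regime you defer: embedding \emph{some} $2$-degenerate graph with $2k-t$ edges into a $C_4$-free(-like) graph with $\Omega(n^{3/2})$ edges. As the paper notes (and as your own Claim-\ref{prop:girth}-style computation confirms), the members of $\mathcal H_{k,t}$ one must find there can have arbitrarily large girth, so none of the known partial results toward Conjecture~\ref{conj:conlon} apply, and the freedom to choose which $H\in\mathcal H_{k,t}$ to embed has not yet been converted into a working counting or dependent-random-choice argument. Until that step is carried out, this remains a conjecture, not a theorem.
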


Let us briefly explain why Conjecture \ref{conj:weak} is indeed weaker than Conjecture \ref{conj:conlon}.
It is not hard to see that for every $t$ and large enough $k$, the family $\mathcal{H}_{k,t}$ contains
$C_{4}$-free graphs (see Claim \ref{prop:girth}).
Conjecture \ref{conj:conlon} then states that if $G$ has $\Omega(n^{3/2})$ edges then $G$ should contain
a copy of {\bf every} $H \in \mathcal{H}_{k,t}$ which is $C_4$-free, while Conjecture \ref{conj:weak} only asks
$G$ to contain a copy of some $H \in \mathcal{H}_{k,t}$.
Note also that Conjecture \ref{conj:weak} is weaker than the statement that for every $k \geq k_0$ we have $\text{ex}(n,H)=o(n^{3/2})$ for some $H \in \mathcal{H}_{k,t}$, which is itself weaker than Conjecture \ref{conj:conlon}.


Our main result in this paper is the following alternative approach for resolving Conjecture \ref{conj:weakBES}.

\begin{theorem}\label{thm:main}
Conjecture~\ref{conj:weak} implies Conjecture \ref{conj:weakBES}.
\end{theorem}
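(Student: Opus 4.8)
The plan is to take a $3$-graph $G$ on $n$ vertices with $\Omega(n^2)$ edges and build an auxiliary graph $\Gamma$ on the same vertex set whose edges are certain pairs of vertices that lie in a common hyperedge of $G$, arranged so that $\Gamma$ has $\Omega(n^{3/2})$ edges (or rather, after a cleaning step, a subgraph on $n'$ vertices with $\Omega(n'^{3/2})$ edges). Concretely, I would first pass to a subhypergraph in which every edge is "useful": delete hyperedges one at a time as long as some hyperedge contains a pair of vertices covered by few other hyperedges, so that in the remainder every pair spanned by a hyperedge is in fact spanned by many of them. This linear-hypergraph-style cleaning is standard and costs only a constant factor in the number of edges, so we still have $\Omega(n^2)$ hyperedges. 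Now define $\Gamma$ by choosing, for each remaining hyperedge, one of its three pairs (greedily, to keep $\Gamma$ simple and to keep the hyperedge-to-edge map injective on a large subfamily); this yields a graph $\Gamma$ with $\Omega(n^2)$ edges — actually far more than $n^{3/2}$ — and a map $\phi$ assigning to each edge of $\Gamma$ a hyperedge of $G$ containing it.

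Next I would invoke Conjecture~\ref{conj:weak}: since $\Gamma$ has $\Omega(n^{3/2})$ edges, it contains a copy of some $H \in \mathcal{H}_{k,t}$ for a suitable fixed large $k$ — say $k$ just above $k_0$ and large enough that $e := 2k - t - (\text{something})$ hits the target; more precisely we should run the argument with $k$ chosen as a function of $e$, so that the copy of $H$ we extract has exactly the right number of edges to produce $e$ hyperedges. The copy of $H$ is a graph on $k$ vertices with $2k-t$ edges; lifting each of its $2k-t$ edges through $\phi$ produces a family of at most $2k-t$ hyperedges of $G$, spanned by at most $k$ vertices plus one extra vertex per hyperedge, i.e.\ by at most $k + (2k-t) = 3k - t$ vertices. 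The key point is that $H$ being $2$-degenerate lets us bound the excess vertices: order $V(H)$ so every vertex has at most $2$ back-neighbours, process hyperedges in this order, and observe that each new hyperedge $\phi(uv)$ (with $v$ the later vertex) introduces at most one new vertex beyond $\{u,v\}$, which are already present. Hence $2k-t$ hyperedges live on at most $k + (2k-t) = 3k-t$ vertices; to get an $(e+d, e)$-configuration with $e = 2k-t$ we need $k + (2k - t) \le e + d$, i.e.\ $k \le d + t$, which fails for large $k$ — so this naive lift is not enough, and here is where the real work lies.

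The hard part, and the main obstacle, is controlling the number of \emph{extra} vertices introduced when lifting edges of $H$ to hyperedges of $G$: naively each of the $2k-t$ edges contributes a fresh third vertex, which is far too wasteful. The fix is to exploit the cleaning step: because every pair spanned by a surviving hyperedge is spanned by \emph{many} hyperedges, when we process the edges of $H$ in the degeneracy order we have, at each step, a large set of candidate "third vertices" for each edge, and we can greedily choose these third vertices to coincide with vertices already used or with each other — an intersecting/sunflower-type argument forcing many hyperedges to share their apex. Quantitatively, one wants that after choosing third vertices for the first $j$ edges of $H$, the total vertex set has grown only by $O(1)$ beyond $k$; this requires the abundance from cleaning to beat the union bound over the (bounded) number of edges of $H$, and it is exactly this trade-off between $k$, $t$, the deficiency $d$, and the multiplicity guaranteed by cleaning that dictates the choice of the constants $t, k_0$ in Conjecture~\ref{conj:weak}. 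Once this absorption of third vertices into a bounded set is achieved, the resulting $e = 2k - t$ hyperedges are spanned by $k + O(1) = e + (t + O(1))$ vertices, giving the constant deficiency $d$ of Conjecture~\ref{conj:weakBES}. I expect the delicate point to be ensuring that the greedy choice never gets stuck — i.e.\ that the "many hyperedges per pair" bound from cleaning is uniform enough to survive conditioning on earlier choices — and handling the boundary case where a pair of $H$-vertices is itself an edge whose lift conflicts with a previously chosen apex.
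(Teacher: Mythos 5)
There is a genuine gap, and it is located exactly where you suspect: the lift of $H$ to hyperedges cannot be repaired by the absorption argument you sketch. The root cause is the choice of auxiliary graph. Your $\Gamma$ is the shadow graph on the original $n$ vertices, with $\Omega(n^2)$ edges; this is so far above the $n^{3/2}$ threshold that Conjecture~\ref{conj:weak} is not really being used (a graph with $\Omega(n^2)$ edges already contains every fixed $2$-degenerate graph, e.g.\ via K\H{o}v\'ari--S\'os--Tur\'an), which signals that the reduction has discarded the information the conjecture is supposed to exploit. Each edge of your $H$ then lifts to a single hyperedge carrying its own third vertex, so $e=2k-t$ hyperedges sit on up to $3k-t$ vertices and the deficiency grows like $k$. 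The proposed fix --- clean so that every pair lies in many hyperedges, then greedily force apexes to coincide --- fails in the essential case: by the standard reduction (which the paper invokes) one may assume $\G$ is \emph{linear}, so each pair lies in at most one hyperedge and there is no abundance at all; and even without linearity, two distinct pairs sharing an apex $c$ means both pairs lie in the link of $c$, a structural coincidence that no union bound over candidate sets of size $m\ll n$ can force. In short, ``many hyperedges per pair'' does not translate into ``many shared apexes across different pairs.''

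The paper's construction is designed precisely to build this apex-sharing into the auxiliary graph: its vertices are \emph{pairs} of vertices of $\G$ (elements of $\binom{\A}{2}\cup\binom{\B}{2}$), and an edge between $\{a_1,a_2\}$ and $\{b_1,b_2\}$ certifies \emph{two} hyperedges of $\G$ meeting at a common $c\in\C$. This graph has $\Omega(N^{3/2})$ edges on $N\le n^2$ vertices --- exactly at the threshold of Conjecture~\ref{conj:weak} --- and unpacking a degree-$2$ vertex of $F$ then contributes up to $4$ hyperedges against up to $4$ new vertices, so the edge/vertex bookkeeping balances. Even then one must handle steps in which the ``new'' vertices or hyperedges were already present (the $0$-steps), which is why the paper needs the dichotomy of Lemma~\ref{lem:key} and the remove-and-restart argument rather than a single application of the conjecture. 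Your proposal addresses neither the correct auxiliary construction nor this second difficulty, so as written it does not yield a bounded deficiency.
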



Before turning to the proof of Theorem \ref{thm:main}, we mention that it might very well be the case
that in Conjecture \ref{conj:weak} we can replace the lower bound $\Omega(n^{3/2})$ by $\Omega(n^{3/2-\delta})$
for some $\delta=\delta(k)>0$. Indeed, this bound is implied by Conjecture \ref{conj:conlon}.
It is not hard to see that in this case the proof of Theorem \ref{thm:main} would give that for some absolute constant $d$ and for every $e$
there is $\varepsilon=\varepsilon(e)>0$ so that one can find $(e+d,e)$-configurations in every $3$-graph with $n^{2-\varepsilon}$ edges.
Such a result would be an approximate version of a conjecture suggested by Gowers and Long \cite{GowersLong}, stating that $3$-graphs with
$n^{2-\varepsilon}$ edges contain $(e+4,e)$-configurations.

\section{Proof of Theorem \ref{thm:main}}

To avoid confusion, we will refer to edges of a $3$-graph as hyperedges.
Fix $e\geq 3$ and let $\G$ be a $3$-graph with $n$ vertices and $\Omega(n^2)$ hyperedges.
We will rely on the well known observation that in the context of the BESC one can assume that $\G$ is linear and $3$-partite
on vertex sets $(\A,\B,\C)$. We now apply a variant of the construction of Solymosi and Solymosi~\cite{SolymosiSolymosi}. Given $\G$, define an auxiliary bipartite multigraph $G'$ as follows. Set $V(G')=(A,B)$ where $A=\binom{\A}{2}$ and $B=\binom{\B}{2}$. For two vertices $\{a_1,a_2\}\in A$ and $\{b_1,b_2\}\in B$ put an edge between them if there is a $c\in \C$ so that $a_1b_1c$ and $a_2b_2c$ are hyperedges of $\G$, and (independently) put an edge between them if there is a $c'\in \C$ such that $a_1b_2c'$ and $a_2b_1c'$ are hyperedges of $\G$. Since $\G$ is linear, each pair of vertices in $G'$ are connected by at most $2$ edges.
If we let $d(c)$ denote
the degree of a vertex $c\in \C$ in $\G$ then
$$|E(G')|=\sum_{c\in \C}\binom{d(c)}{2}\geq |\C| \binom{\frac{1}{|\C|}\sum_{c\in \C} d(c)}{2}=|\C|\binom{|E(\G)|/|\C|}{2}\geq \frac{|E(\G)|^2}{4|\C|}.
$$
\noindent
Since $e(\G)=\Omega(n^2)$, $|\C|\leq n$, and $|V(G')|\leq n^2$, we obtain  $|E(G')|=\Omega(|V(G')|^{3/2})$.
Since, as noted above, each pair of vertices in $G'$ are connected by at most $2$ edges, $G'$ has a simple subgraph $G$ which also contains $\Omega(|V(G)|^{3/2})$ edges.
Therefore, if $k_0$ and $t$ are the constants from Conjecture \ref{conj:weak} and $n$ is large enough, then we may assume the following.

\begin{observation}\label{clm:F}
For every $k_0\leq k \leq e$, the graph $G$ contains a $2$-degenerate bipartite graph $F$ on $k$ vertices with at least $2k-t$ edges.
\end{observation}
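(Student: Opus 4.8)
The plan is to read the statement off directly from Conjecture~\ref{conj:weak} applied to the graph $G$ constructed above; essentially all the work has already gone into that construction, so the ``proof'' amounts to checking that each property demanded of $F$ is in place. Recall what we know about $G$: it is a \emph{simple} graph (we explicitly passed to a simple subgraph of $G'$); it is \emph{bipartite}, being a subgraph of the bipartite multigraph $G'$ with parts $A=\binom{\A}{2}$ and $B=\binom{\B}{2}$; and $|E(G)|=\Omega(|V(G)|^{3/2})$. Finally $|V(G)|\to\infty$ as $n=|V(\G)|\to\infty$ — clear from the construction (for instance the displayed bound already gives $|E(G)|=\Omega(n^3)$, while $|E(G)|\le\binom{|V(G)|}{2}$, so $|V(G)|=\Omega(n^{3/2})$).

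Next, fix any $k$ with $k_0\le k\le e$, where $k_0,t$ are the constants of Conjecture~\ref{conj:weak}, and apply that conjecture to $G$ for this value of $k$; here the role of the parameter ``$n$'' in the conjecture is played by $|V(G)|$, which is as large as we please once $n$ is large enough. The conjecture then produces a copy $F\subseteq G$ of some $H\in\mathcal{H}_{k,t}$. By definition of $\mathcal{H}_{k,t}$ this $F$ has exactly $k$ vertices and exactly $2k-t$ edges, in particular at least $2k-t$; it is $2$-degenerate, since $2$-degeneracy is an isomorphism invariant and $H$ is $2$-degenerate; and it is bipartite, because any subgraph of the bipartite graph $G$ is bipartite (equivalently, $F$ embeds into a bipartite graph and hence contains no odd cycle). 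These are precisely the properties asserted.

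The one point worth a comment is the uniformity of the threshold over $k$: Conjecture~\ref{conj:weak} supplies, for each fixed $k$, a bound $n_0(k)$ past which its conclusion holds, whereas we want the conclusion simultaneously for every $k$ in the range $k_0\le k\le e$. Since this is a finite range bounded in terms of the fixed parameter $e$, it suffices to take $n$ large enough that $|V(G)|>\max_{k_0\le k\le e}n_0(k)$ — a bound depending on $e$ alone — which is legitimate as we are proving an asymptotic statement with $e$ fixed. I do not anticipate any real obstacle here: the entire combinatorial content is already contained in the construction of the simple bipartite graph $G$ with $\Omega(|V(G)|^{3/2})$ edges and in Conjecture~\ref{conj:weak} itself.
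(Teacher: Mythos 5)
Your proposal is correct and matches the paper's (essentially one-line) justification: the paper likewise derives the observation by applying Conjecture~\ref{conj:weak} to the simple bipartite graph $G$ with $\Omega(|V(G)|^{3/2})$ edges, taking $n$ large enough. Your extra checks (bipartiteness of $F$ as a subgraph of $G$, growth of $|V(G)|$, and uniformity of the threshold over the finitely many $k\in[k_0,e]$) are exactly the details the paper leaves implicit.
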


We would now like to understand what kind of $(v,e)$-configuration in $\G$ we get by ``unpacking'' each of the graphs $F$ in Observation \ref{clm:F}. Optimistically, if $v_1,\ldots,v_k$ is the ordering of
$V(F)$ certifying its $2$-degeneracy, then every time we add a vertex $v_i$ to $v_1,\ldots,v_{i-1}$ of degree $2$ to the previous vertices, we expect to get
$4$ new vertices in $\G$; these are $c_1,c_2$ and either $a_1,a_2$ (if $v_i \in A$) or $b_1,b_2$ (if $v_i \in B$). We also expect to get $4$ new hyperedges in $\G$; these are the $4$ hyperedges that correspond to the $2$ new edges in $G$ that connect $v_i$ to $2$ of the vertices $v_1,\ldots,v_{i-1}$.
If this holds for all but a bounded number of $F$'s vertices, then we will get a $(4k,4k-O_k(1))$ configuration, hence taking $k \approx e/4$ would finish the proof.
Unfortunately, we do not know how to prove such a statement, since in certain cases (see below) some of the $4$ vertices/hyperedges might have already appeared when adding one of the previous vertices $v_j$. Instead, the main idea in Lemma \ref{lem:key} below is to show that $F$ gives rise to a $(e'+d,e')$-configuration, so that if $e'$ is not very close to $4k$ (as in the optimistic analysis above) then we have $d\leq 0$. It is then easy to show how repeated applications of Lemma \ref{lem:key} give Theorem \ref{thm:main}. In what follows $G$ and $\G$ are those
we discussed above.

\begin{lemma}\label{lem:key}
Let $k\geq t\geq 4$ be integers, and suppose $F$ is a $2$-degenerate subgraph of $G$ with $k$ vertices and $2k-t$ edges.
Then $\G$ contains a subgraph $\F$ such that
\begin{enumerate}[label=(\roman*)]
	\item[(1)] $|V(\F)|-4t \leq |E(\F)| \leq 4k $, and
	\item[(2)] Either $|E(\F)|\geq 4k-10^4t^3$ or $|E(\F)|\geq |V(\F)|>0$.
\end{enumerate}
\end{lemma}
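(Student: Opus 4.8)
The plan is to make rigorous the ``unpacking'' heuristic described just before the lemma statement, while carefully tracking how vertices and hyperedges can be reused. Fix a $2$-degeneracy ordering $v_1,\ldots,v_k$ of $V(F)$, so that each $v_i$ has at most two neighbours among $v_1,\ldots,v_{i-1}$; call $v_i$ a \emph{full} vertex if it has exactly two such back-neighbours, and a \emph{deficient} vertex otherwise. Since $e(F)=2k-t$, at most $t$ of the vertices (counted with multiplicity $2$ minus their back-degree) are deficient; in particular all but at most $t$ vertices are full, and these are the ones we can hope to charge four new hyperedges to. We build $\F\subseteq\G$ greedily: process $v_1,\ldots,v_k$ in order, and when processing a full vertex $v_i$ with back-neighbours $v_j,v_{j'}$, each edge $v_iv_j$ of $G$ came (by the definition of $G'$) from a vertex $c\in\C$ together with two hyperedges $a_1b_1c,a_2b_2c$ of $\G$ (with $\{a_1,a_2\}$ and $\{b_1,b_2\}$ being the pairs labelling $v_i$ and $v_j$); add these hyperedges and the vertex $c$ to $\F$. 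Thus each of the (at most $2k$) edges of $F$ contributes two hyperedges of $\G$, so $|E(\F)|\le 2\cdot 2k=4k$, giving the upper bound in (1). For deficient vertices we simply add whatever hyperedges their (zero or one) back-edges produce; there are at most $t$ such vertices, each contributing at most one extra edge of $F$, hence losing at most $O(t)$ hyperedges — this harmless loss is absorbed into the constants.

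The heart of the matter is the \emph{lower} bound $|V(\F)|-4t\le|E(\F)|$ in (1) and the dichotomy in (2), which requires bounding how much ``collision'' can occur — i.e.\ how often a vertex of $\F$ or a hyperedge of $\F$ introduced while processing $v_i$ had already been introduced earlier. First I would count vertices: the vertices of $\F$ are of three types — the $\C$-vertices $c$ (at most one per edge of $F$, so at most $2k$ of them, but possibly far fewer due to repetition), the $\A$-vertices, and the $\B$-vertices. Each $v_i\in A$ is a pair $\{a_1,a_2\}\subseteq\A$, and it is used as an endpoint of its back-edges, so the $\A$-vertices appearing in $\F$ are exactly $\bigcup\{a_1,a_2\}$ over those $v_i\in V(A(F))$ that are non-isolated in $F$ — at most $2|V(F)\cap A|$ of them, and symmetrically for $\B$. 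A clean way to organize the bookkeeping is to observe that $|V(\F)|\le |E(\F)| + (\text{number of connected components of }\F) + (\text{something governed by }t)$: indeed for \emph{any} graph/hypergraph built edge by edge, $|V|-|E|$ is at most the number of components plus the number of ``wasted'' edges (edges not decreasing the component count), so the slack $|V(\F)|-|E(\F)|$ is controlled once we show $\F$ is ``almost connected with almost no redundant edges.'' The key structural claim, which I expect to be the main obstacle, is: \emph{the number of pairs $(i, \text{back-edge at }v_i)$ whose processing adds a hyperedge or $\C$-vertex already present is $O(t^3)$}, with the cubic loss coming from the interplay of $C_4$'s / repeated $c$'s in $G$ with the $t$ deficient vertices.

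To attack that claim I would argue as follows. A hyperedge $a_1b_1c$ of $\G$ can be re-created by two different edges $v_iv_j, v_{i'}v_{j'}$ of $F$ only if the pairs $\{a_1,a_2\},\{a_1,a_2'\}$ labelling $v_i,v_{i'}$ both contain $a_1$, and likewise on the $\B$ side, and the same $c$ is used — by linearity of $\G$ this forces strong coincidences among the labels. Likewise a single $c\in\C$ is reused by edges $v_iv_j$ and $v_{i'}v_{j'}$ only when the corresponding two edges of $G$ both came from that same $c$; since $c$ has some degree $d(c)$ in $\G$, it generates a clique-like blob of $\binom{d(c)}{2}$ edges in $G'$, but $F$ being $2$-degenerate cannot contain a large such blob — a $2$-degenerate graph on the $m$ ``$c$-edges'' has at most $2m$ edges, while a clique blob on $q$ pairs has $\binom q2$ edges, forcing $q\le 4$ or so; hence each $c$ is reused boundedly often \emph{unless} $F$ restricted to its edges is very non-clique-like, which in turn is constrained by the deficiency budget $t$. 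Combining: only the $O(t)$ deficient vertices, together with the $O(t)$ vertices ``near'' a reused $c$ or a reused hyperedge, can cause slack, and each such vertex can cause $O(t^2)$ further damage through cascading, giving the $O(t^3)$ bound and hence the constant $10^4t^3$. Finally I would assemble the two cases of (2): if essentially no collisions occur then every edge of $F$ gave two genuinely new hyperedges and $|E(\F)|\ge 4k-10^4t^3$ as in the first alternative; if many collisions occur, they all stem from a bounded-size ``core,'' and discarding that core leaves a subhypergraph $\F$ with $|E(\F)|\ge|V(\F)|>0$ (it is then at worst a disjoint union of ``tight-ish'' pieces each with at least as many hyperedges as vertices), which is the second alternative. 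The positivity $|V(\F)|>0$ is immediate since $F$ has at least one edge ($k\ge t\ge4$ and $e(F)=2k-t\ge k\ge 1$).
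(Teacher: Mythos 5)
Your construction of $\F$ and the upper bound $|E(\F)|\le 4k$ match the paper, but the core of your argument has a genuine gap: the ``key structural claim'' that the total number of collisions (re-created hyperedges or re-used $\C$-vertices) is $O(t^3)$ is not justified, and it is essentially false --- indeed, if it were true you would get $|E(\F)|\ge 4k-O(t^3)$ unconditionally, and the lemma would not need to be stated as a dichotomy. The sketch you give for it rests on a wrong picture: the edges of $G'$ certified by a single $c\in\C$ do \emph{not} form a clique-like blob but a \emph{matching} (the $A$-pair $\{a_i,a_j\}$ is joined only to the $B$-pair $\{b_i,b_j\}$), so $2$-degeneracy of $F$ puts no bound on how many edges of $F$ a single $c$ certifies, and a single $c$ (or hyperedge) can be re-used many times regardless of $t$. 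The correct mechanism, which your proposal is missing, is a trade-off along the degeneracy ordering: call step $i$ a $0$-step if it adds no new hyperedge, a $4$-step if it adds $4$ new hyperedges and $4$ new vertices, and \emph{good} otherwise; a case analysis (if a hyperedge is old then both of its non-$\B$ endpoints are old, etc.) shows that at every good regular step the quantity $|E(\F_i)|-|V(\F_i)|$ strictly increases, while it drops by at most $2$ at each of the at most $2t$ singular steps. Hence either there are at least $4t$ good regular steps, in which case $|E(\F)|\ge|V(\F)|>0$ (the second alternative, with no ``discarding of a core'' needed --- your discarding step is both unjustified and would force you to re-verify (1)); or there are few good steps, and then one must show there are few $0$-steps. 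That last implication is where the real work lies: one uses the observation that a pair of hyperedges $a_1b_1c,a_2b_2c$ can be involved in at most one step (since the pair determines an edge of $F$), deduces that every $0$-step involving $c$ is preceded by a good step involving $c$, and finishes by a pigeonhole over the set of $c$'s involved in $0$-steps; this is what produces the $t^3$.

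Separately, your treatment of the lower bound $|V(\F)|-4t\le|E(\F)|$ in (1) is also incomplete. The component-counting heuristic (``$|V|\le|E|+\#\text{components}+\dots$'') is not developed into an argument, and the bound does not follow from connectivity considerations alone: what is actually needed is the per-step inequality $\Delta_V(i)\le\Delta_E(i)$ for full vertices, proved by checking that whenever one of the four candidate hyperedges is already present its vertices are already present (e.g.\ $\Delta_E(i)=1$ forces $\Delta_V(i)=0$), together with the bound $\Delta_V(i)\le\Delta_E(i)+2$ at the at most $2t$ singular steps. So the overall architecture of your proof (an absolute $O(t^3)$ collision bound) needs to be replaced by the dichotomy argument; the pieces you do have (the unpacking, the upper bound, the role of linearity) are correct but are the easy part.
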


We first derive Theorem~\ref{thm:main} from Lemma~\ref{lem:key}.
Assuming Conjecture~\ref{conj:weak} holds with constants $t,k_0$ we show that
Conjecture \ref{conj:weakBES} holds with $d=\max\{24k_0,3(4t+10^4t^3)\}$.
Indeed, we claim that for every $0 \leq e' \leq e $ we can find $e'$ hyperedges in $\G$ spanned by at most $e'+d$ vertices.
If $e' \leq \max\{8k_0,4t+10^4t^3\}$, we just take $e'$ arbitrary hyperedges from $\G$.
For larger $e'$ we apply Lemma~\ref{lem:key} with the above $t$ and with $k=\lfloor e'/4\rfloor \geq k_0$ (by Observation \ref{clm:F} we know that $G$ contains an $F$ with these parameters).
If the lemma returns a configuration $\F'$ whose number of edges satisfies $e'-10^4t^3-4 \leq |E(\F')|\leq e'$ (and is on at most $e'+4t$ vertices),
we just add to $\F'$ arbitrarily chosen $e'-|E(\F')|\leq 10^4t^3+4$ hyperedges to get a set of $e'$ edges on at most $e'+d$ vertices.
Otherwise, we have $|E(\F')|\geq |V(\F')|>0$ so we can remove $\F'$ from $\G$ and then restart the process with $e''=e'-|E(\F')|$ (the $3$-graph $\G \setminus \F$ still
has $\Omega(n^2)$ hyperedges assuming $n$ is large). We will obtain
a set $\F''$ of $e''$ hyperedges on at most $e''+d$ vertices, and can then return $\F'' \cup \F'$ as the set
of $e'$ hyperedges on at most $e'+d$ vertices.

\begin{proof}[Proof of Lemma \ref{lem:key}]
Suppose $G$ contains a subgraph $F$ as above. Let $v_1,\dots,v_k$ be the vertices of $F$ in the order that certifies its $2$-degeneracy. For each $i\in[k]$ let $F_i=F[v_1,\dots,v_i]$ be the induced subgraph on the first $i$ vertices. Let $\F_i\subseteq \G$ be a subgraph of $\G$ that corresponds to $F_i$. That is,
$$V(\F_i)\cap (\A\cup \B)=\{p\in \A\cup \B\colon \{p,q\}\in V(F_i) \text{ for some } q\in \A\cup \B\},$$
and for every edge $uv$ of $F_i$, where $u=\{a_1,a_2\}$ and $b=\{b_1,b_2\}$ let $c\in \C$ be the (unique) vertex certifying that $uv\in E(F)$ (in particular $\{a_1b_1c,a_2b_2c\}\subseteq E(\G)$ or $\{a_1b_2c,a_2b_1c\}\subseteq E(\G)$). We include $c$ in $V(\F_i)$ and the corresponding pair of hyperedges 
in $E(\F_i)$, and applying the same procedure for each edge of $F_i$ we take the union of the resulting hyperedges.

\paragraph{Proof of assertion $(1)$:}
Initially we have a graph $F_0:=(\emptyset,\emptyset)$ with $0$ edges and vertices.
Given some $i\in[k]$, let $F^-:=F_{i-1}$ and $\F^-:=\F_{i-1}$. Suppose without loss of generality that $v_i\in A$, that is, $v:=v_i$ corresponds to a pair $\{a_1,a_2\}\in \binom{\A}{2}$. 
Let $d(v)$ denote the degree of $v$ in $F_i$, by our assumptions we have $d(v)\leq 2$. Let $\Delta_E(i):=|E(\F_i)\setminus E(\F^-)|$ and $\Delta_V(i):=|V(\F_i)\setminus V(\F^-)|$.

Note that $$0\leq \Delta_E(i)\leq 2 d(v)\leq 4,$$ which, summing over all $i$ gives the inequality $|E(\F)| \leq 4k$ stated in assertion $(1)$.
To prove the second inequality, we need to consider the degree of $v$: if $d(v)=2$, let us call $v$ a \emph{regular} vertex, otherwise (if $d(v)$ is $0$ or $1$) we say that $v$ is \emph{singular}. Accordingly, we are speaking of a regular or singular step $i$.
A crucial observation is that since $F$ is $2$-degenerate and has $2k-t$ edges, then the total number of singular steps is at most $2t$.

Suppose first that $v$ is regular, and let $u$ and $w$ be the two neighbours of $v$ in $F^-$. Let $u$ and $w$ correspond to $\{b_1,b_2\}\in \binom{\B}{2}$ and $\{b_3,b_4\}\in \binom{\B}{2}$ respectively, with  $\{b_1,b_2\}\neq \{b_3,b_4\}$ (note that some individual $b_1,b_2,b_3,b_4$ may coincide).
Furthermore, we have vertices $c_1,c_2\in \C$ such that (after relabelling) $a_1b_1c_1$, $a_2b_2c_1$, $a_1b_3c_2$ and $a_2b_4c_2$ are hyperedges of $\F_i$. Note that we must have $c_1\neq c_2$ for otherwise, by linearity of $\G$, we would have $b_1=b_3$ and $b_2=b_4$, and so $\{b_1,b_2\}=\{b_3,b_4\}$.
Since all other hyperedges of $\F_i$ were already contained in $\F^-$, we have
\begin{equation}\label{eqdeltaE}
E(\F_i)\setminus E(\F^-)\subseteq \{a_1b_1c_1, a_2b_2c_1, a_1b_3c_2, a_2b_4c_2\}\;.
\end{equation}
Similarly, $V(\F_i)\setminus V(\F^-)\subseteq\{a_1,a_2,c_1,c_2\}$, and so we also have $0 \leq \Delta_V(i) \leq 4$.

We now claim that $\Delta_V(i)\leq \Delta_E(i)$, and that in fact $\Delta_V(i)< \Delta_E(i)$ when $\Delta_E(i)\in \{1,2,3\}$ (this will be used in the proof of assertion $(2)$). Indeed, if $\Delta_E(i)=4$, then there is nothing to prove since $\Delta_V(i)\leq 4$. If $\Delta_E(i)=3$, then without loss of generality the hyperedge $a_1b_1c_1$ was already contained in $\F^-$. Hence, $\{a_1,c_1\}\subseteq V(\F^-)$, implying $\Delta_V(i)\leq 2$. Similarly, if $\Delta_E(i)=2$, we have $\Delta_V(i)\leq 1$ (if $a_1b_1c_1$ and $a_2b_2c_1$ were in $\F^-$ then only $c_2$ can be a new vertex, and if $a_1b_1c_1$ and one of the hyperedges containing $c_2$ were already in $\F^-$ then only $a_2$ can be a new vertex), and if $\Delta_E(i)=1$, then $\Delta_V(i)= 0$ (if only $a_1b_1c_1$ is a new hyperedge then $c_1$ was added with $a_2b_2c_1$ and $a_1$ was added with $a_1b_3c_2$.). Finally, if $\Delta_E(i)=0$ then $\{a_1, a_2,c_1,c_2\}\subseteq V(\F^-)$ so $\Delta_V(i)=0$. So, we obtain $|E(\F_i)|-|V(\F_i)|\geq |E(\F_{i-1})|-|V(\F_{i-1})|$.

If $v$ is singular, a similar case analysis shows that  $|E(\F_i)|-|E(\F_{i-1})|\geq |V(\F_i)|-|V(\F_{i-1})|-2$. Since there are at most $2t$ singular steps in total, summing over all $i$ yields $|E(\F)|\geq |V(\F)|-4t$ as desired.

\paragraph{Proof of assertion $(2)$:} In order to prove the second assertion we need to study the above process in more detail.

Suppose a step $i$ is regular. If $\Delta_E(i)=0$ we call it a \emph{$0$-step}, if $\Delta_E(i)=\Delta_V(i)=4$ we say this is a \emph{$4$-step}.
If we have $\Delta_V(i)<\Delta_E(i)$, then we call this step a \emph{good} regular step. Note that by the argument
in the paragraph following (\ref{eqdeltaE}), every regular step which is not a $0$-step or a $4$-step is a good step.
Note also that at each good regular step the difference $|E(\F_i)|-|V(\F_i)|$ strictly increases and, as we have seen in the proof of $(1)$, this difference decreases only at singular steps, in which it decreases by at most $2$. Hence, if the total number of good regular steps is at least $4t$ we would have $|E(\F)|\geq |V(F)|>0$ as needed. So let us assume for the rest of the proof that we have fewer than $4t$ good regular steps. Let us say that a (regular or singular) step is \emph{good} if it is either good regular in the above sense or singular. So, the total number of good steps is less than $6t$.

If the number of $0$-steps is at most $s:=6t(12t+2)^2$, then all but $s+6t$ of the steps are $4$-steps and so we have $|E(\F)|\geq 4k-4(s+6t)\geq 4k-10^4t^3$ as needed. So suppose towards contradiction that this is not the case, i.e., that the number of $0$-steps is greater than $s$.
We will now show that this means that the total number of good and steps is at least $6t$, contradicting the statement made in the previous paragraph.

We say that a vertex $c\in \C$ is \emph{involved} in step $i$ (or equivalently, step $i$ \emph{involves} $c$) if $c$ plays the role of either $c_1$ or $c_2$ in the extension of $\F_{i-1}$ to $\F_{i}$ described above. Note that each regular step involves precisely two vertices of $\C$. Similarly, we say that a hyperedge $e\in E(\G)$ is involved in step $i$ if it plays the role of one of the hyperedges arising in the extension of $\F_{i-1}$ to $\F_{i}$ (we stress that this is regardless of whether $e$ had already been contained in $\F_{i-1}$).
\begin{observation}\label{obs:pair}
A pair of hyperedges $e_1=a_1b_1c$ and $e_2=a_2b_2c$, where $a_1,a_2\in A, b_1,b_2\in B, c\in C$, can simultaneously be involved in at most one step.
\end{observation}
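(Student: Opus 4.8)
The statement to prove is Observation~\ref{obs:pair}: a pair of hyperedges $e_1 = a_1b_1c$ and $e_2 = a_2b_2c$ (sharing the vertex $c \in \C$) can be involved in at most one step of the process.

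My plan is to argue by contradiction: suppose this pair of hyperedges is involved in two distinct steps $i < j$. The key point is to extract from the pair $\{e_1, e_2\}$ a well-defined edge of $G$, and show that this edge is "consumed" by at most one step. When the pair $e_1 = a_1b_1c$, $e_2 = a_2b_2c$ is involved in a step, it realises (part of) an edge of $F$ between the vertex $\{a_1,a_2\} \in A = \binom{\A}{2}$ and the vertex $\{b_1,b_2\} \in B = \binom{\B}{2}$, certified by the common vertex $c \in \C$. Since $\G$ is linear, the vertex $c$ together with the unordered pairs $\{a_1,a_2\}$ and $\{b_1,b_2\}$ is actually determined by the pair $\{e_1,e_2\}$ alone, so this pair of hyperedges picks out a specific edge $u w$ of $F$ (where $u = \{a_1,a_2\}$, $w = \{b_1,b_2\}$), namely the one whose $\C$-certificate is $c$. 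First I would make precise that being "involved in step $i$" means $v_i \in \{u,w\}$ and the edge $uw$ is one of the (at most two) edges of $F$ joining $v_i$ to $F_{i-1}$ whose certificate is $c$.

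Next I would observe that in the ordering $v_1, \dots, v_k$ certifying $2$-degeneracy, each edge $uw$ of $F$ is "processed" exactly once: at the step $i$ where the later of $u, w$ in the ordering is added. At any step $\ell$, the only new edges of $\F_\ell$ relative to $\F_{\ell-1}$ come from edges of $F$ joining $v_\ell$ to $F_{\ell-1}$; and an edge $uw$ with, say, $u = v_i$, $w = v_j$, $i < j$, can only cause hyperedges to be "involved" at step $j$ (it is not yet present at step $i$ since $w$ has not been added, and it is already present before any step $> j$). Thus the edge $uw$ of $F$ singled out by $\{e_1, e_2\}$ is touched at exactly one step, namely $\max\{i,j\}$ in the $2$-degeneracy ordering, contradicting the assumption that $\{e_1,e_2\}$ is involved in two distinct steps.

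The main obstacle — and the only place where care is needed — is the claim that the pair of hyperedges $\{e_1, e_2\}$ determines the edge $uw$ of $F$ uniquely, i.e.\ that linearity of $\G$ rules out the pair arising from two different edges of $F$ or from two different $\C$-certificates of the same edge. Here linearity forces $c$ to be the unique vertex of $\C$ lying in both $e_1$ and $e_2$, and it forces $\{a_1,a_2\}$ (respectively $\{b_1,b_2\}$) to be the unique pair in $\binom{\A}{2}$ (respectively $\binom{\B}{2}$) determined by the non-$c$ vertices of $e_1, e_2$; note also that $e_1, e_2$ being a valid certifying pair means they cover two distinct vertices of $\A$ and two distinct vertices of $\B$ (if $a_1 = a_2$ or $b_1 = b_2$ then by linearity $e_1 = e_2$ and there is nothing to involve). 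Once this identification is pinned down, the rest is the bookkeeping above. I expect the write-up to be short: one paragraph setting up the map $\{e_1,e_2\} \mapsto uw$, and one paragraph noting each edge of $F$ is processed once.
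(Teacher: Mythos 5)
Your proposal is correct and follows essentially the same route as the paper: the pair $\{e_1,e_2\}$ determines the vertices $u=\{a_1,a_2\}$ and $w=\{b_1,b_2\}$ of $F$, and the edge $uw$ is processed only at the single step where the later of $u,w$ appears in the $2$-degeneracy ordering. The paper's proof is a one-sentence version of exactly this observation; your additional care about linearity pinning down the identification is a reasonable (if not strictly necessary) elaboration.
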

\noindent
Indeed, for every step $i$ involving both hyperedges there must be vertices $u,w\in V(F)$ with $u=\{a_1,a_2\}$ and $w=\{b_1,b_2\}$ such that one of $u$ and $w$ is the vertex $v_i$ and the other is $v_j$ for some $j<i$.

We now claim that every $0$-step involving some vertex $c\in \C$ must be preceded by a good step involving $c$.
Indeed, suppose that $c$ is involved in a $0$-step at time $i$. Suppose that $v_i$ represents some $\{a_1,a_2\}\in \binom{\A}{2}$ with $E(F_{i})\setminus E(F_{i-1})=\{v_iu, v_iw\}$ for some $u,w\in V(F)$ representing $\{b_1,b_2\}, \{b_3,b_4\}\in \binom{\B}{2}$ respectively (the case when $v_i\in B$ is identical), and that the hyperedges of $\G$ certifying that $\{v_iu, v_iw\}\subseteq E(F_i)$ (after relabelling) are $\{a_1b_1c,a_2b_2c,a_1b_3c',a_2b_4c'\}$ for some $c'\in \C$ (and note that since this is a $0$-step, all these hyperedges are already contained in $E(\F_{i-1})$). Let $j_1$ be the first step involving the hyperedge $e_1:=a_1b_1c$, i.e. $j_1<i$ is the unique $j$ such that $e_1\in E(\F_j)\setminus E(\F_{j-1})$. Let $j_2$ be defined analogously with respect to $e_2:=a_2b_2c$. If step $j_1$ or $j_2$ are singular, we have proved the claim (since singular steps are good by definition). So, let us assume they are both regular. If $j_1\neq j_2$ then at time $\max(j_1,j_2)$ (say, this is $j_2$) we have a good step involving $c$, since $\Delta_V(j_2)<4$ yet $\Delta_E(j_2)\geq 1$, so this cannot be a $0$-step or a $4$-step and thus must be a good step. On the other hand we cannot have $j_1=j_2$ since that would mean both $e_1$ and $e_2$ would be involved in two different steps, contradicting Observation~\ref{obs:pair}. This proves the above claim.

Now let $\Z\subseteq \C$ be the set of all vertices in $\C$ involved in $0$-steps. Suppose first that $|\Z|>12t$. Then, as for every $z\in \Z$ each $0$-step involving $z$ is preceded by a good step also involving $z$, the number of vertices of $\C$ involved in good steps is greater than $12t$. Since every step involves at most $2$ vertices of $\C$, we obtain that the total number of good steps is greater than $6t$, as needed.

So, let us assume that $|\Z|\leq 12t$. Then, by pigeonhole, some $z\in \Z$ was involved in at least $(2s)/(12t)=(12t+2)^2$ of the $0$-steps. This implies that $z$ must be contained in at least $12t+2$ hyperedges of $\F$, as each $0$-step involving $z$ involves two hyperedges containing $z$, and no such pair may be involved twice by Observation~\ref{obs:pair}.

Let now $J\subset [k]$ be the set of all $j\in [k]$ such that at step $j$ for some hyperedge $e\in \F$ with $z\in e$ we have $e\in E(\F_j)\setminus E(\F_{j-1})$. Since at any given step $j$ we can have at most $2$ such hyperedges $e$, we have $|J|\geq (12t+2)/2= 6t+1$. On the other hand for every step in $j\in J$ except $j_0=\min J$ we have $\Delta_V(j)<4$, since $z\in \F_{j_0}$, and $\Delta_E(j)>0$, by definition of $J$. This means that each of these $|J|-1\geq 6t$ steps is not a $0$-step or a $4$-step, and therefore must be a good step.

We have thus shown that if the number of $0$-steps is at most $s$ then the number of good steps is at least $6t$, which completes the proof of the lemma.
\end{proof}

\section{$C_4$-free graphs in $\mathcal{H}_{k,t}$}

We say that a graph is \emph{exactly-$(2,t)$-degenerate} if it can be obtained from a set of $t$ isolated vertices by repeatedly adding new vertices of degree exactly $2$. Note that every exactly-$(2,t)$-degenerate graph belongs to $\mathcal{H}_{k,t}$.
The following claim shows that $\mathcal{H}_{k,t}$ contains not only $C_4$-free graphs, but in fact graphs of arbitrary large girth.

\begin{claim}\label{prop:girth}
For every $g$ there is $t=t(g)$ so that for every $k\geq t$, there is a $k$-vertex exactly-$(2,t)$-degenerate bipartite graph of girth at least $g$.
\end{claim}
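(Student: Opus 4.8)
The plan is to construct, for a given girth $g$, a suitable $t$ and an explicit sequence of graphs realizing the claim, using known high-girth bipartite graphs as a ``seed'' and then padding with degree-$2$ vertices. First I would fix a bipartite graph $H_0$ of girth at least $g$ in which every vertex has degree at least $2$; for instance one can take an appropriate bipartite graph with $m$ edges, no short cycles, and minimum degree $\geq 2$ (such graphs exist by standard probabilistic deletion or by explicit constructions such as incidence graphs of generalized polygons). Set $t := |V(H_0)|$, so that $H_0$ itself has $t$ vertices and at least $t$ edges. The point of choosing $t=|V(H_0)|$ is that, by definition, an exactly-$(2,t)$-degenerate graph is built from $t$ isolated vertices by adding degree-$2$ vertices, and $H_0$ can be obtained from its own $t$ isolated vertices by adding its edges one at a time; but edges are added via degree-$2$ \emph{vertices}, not directly, so I need a gadget.

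The key gadget is subdivision: replacing an edge $xy$ of a graph by a path $x\,w\,y$ through a new vertex $w$ of degree $2$ is exactly the ``add a vertex of degree $2$'' operation, and it does not decrease girth (in fact each subdivision increases the length of every cycle through that edge by $1$, so girth only goes up). Therefore, starting from the $t=|V(H_0)|$ isolated vertices of $H_0$ and subdividing every edge of $H_0$ once, I obtain an exactly-$(2,t)$-degenerate graph $H_0'$ which is bipartite, has girth at least $g$ (in fact at least $2\cdot\mathrm{girth}(H_0) \geq 2g$), and has $t + |E(H_0)|$ vertices. To reach an arbitrary target vertex count $k \geq t$: if $k \geq t + |E(H_0)|$ I simply keep adding degree-$2$ vertices; the clean way to do this while preserving girth and bipartiteness is to repeatedly pick an existing edge and subdivide it (each such step adds one vertex of degree exactly $2$ and never creates a short cycle). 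If $t \leq k < t + |E(H_0)|$, I instead subdivide only $k - t$ of the edges of $H_0$ — each subdivision adds one new degree-$2$ vertex and leaves the remaining original edges intact — which also produces an exactly-$(2,t)$-degenerate graph on exactly $k$ vertices; one should double-check that partial subdivision still kills all short cycles, which it does because every cycle of $H_0$ has length $\geq g$ already and subdivisions only lengthen cycles.

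Thus the construction is: choose $H_0$ bipartite, girth $\geq g$, min-degree $\geq 2$; set $t=|V(H_0)|$; and for each $k\geq t$ obtain the desired graph by subdividing edges of $H_0$ (fully, then re-subdividing, to hit any $k$) starting from $H_0$'s vertex set viewed as $t$ isolated vertices. The only genuine content is the existence of $H_0$, and the main obstacle — a minor one — is making sure $H_0$ has no degree-$\leq 1$ vertices, since a leaf cannot arise from the exactly-degree-$2$ process and also cannot be fixed by subdivision. This is easily handled: take any graph of girth $\geq g$ with more than $t$ edges and iteratively delete vertices of degree $\leq 1$; this never destroys girth, terminates at a graph of minimum degree $\geq 2$, and that graph is nonempty provided we started with enough edges (a graph on $N$ vertices with $\geq N$ edges has a subgraph of minimum degree $\geq 2$). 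Alternatively one can just invoke an explicit bipartite Moore-type graph. Everything else is routine bookkeeping of vertex counts and the elementary fact that edge subdivision preserves bipartiteness and does not decrease girth.
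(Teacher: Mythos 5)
Your construction, as written, only produces graphs with at most $t+|E(H_0)|$ vertices, and the step intended to reach larger $k$ is where the real gap lies. An exactly-$(2,t)$-degenerate graph on $k$ vertices must have \emph{exactly} $2(k-t)$ edges and its $t$ seed vertices must form an independent set, because the process only ever adds edges (two per new vertex, both incident to that vertex). Hence ``pick an existing edge and subdivide it'' is not a legal step: subdivision deletes the chosen edge, and the resulting graph has $2(k-t)+1$ edges on $k+1$ vertices rather than the required $2(k+1-t)$, so it is not exactly-$(2,t)$-degenerate no matter how you order its vertices. If you instead attach a new degree-$2$ vertex to the two endpoints of an existing edge \emph{without} deleting that edge, you create a triangle. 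Moreover, the ``elementary fact'' you invoke is false: subdividing a single edge that lies on a cycle of a bipartite graph turns an even cycle into an odd one (subdividing one edge of $C_4$ yields $C_5$), so bipartiteness is not preserved. Since $t=t(g)$ must be fixed while $k$ ranges over all integers $\geq t$, and $|E(H_0)|$ is bounded, your argument leaves all but finitely many values of $k$ unproved. There is also a smaller, fixable slip in the intermediate range $t\le k< t+|E(H_0)|$: the graph that ``leaves the remaining original edges intact'' has $|E(H_0)|+(k-t)>2(k-t)$ edges and a non-independent seed set, so it is not exactly-$(2,t)$-degenerate; the correct object is the subgraph of the full subdivision consisting of the $t$ original vertices plus the $k-t$ chosen subdivision vertices and only their incident edges, with the unsubdivided edges of $H_0$ omitted altogether.

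The paper's proof avoids a seed graph entirely and handles all $k\geq t$ uniformly by a greedy counting argument: it grows the graph from $t$ isolated vertices while maintaining maximum degree at most $8$, a balanced bipartition, and girth at least $g$, and at each step attaches the new vertex to two vertices $u,v$ in the larger part that both have degree at most $7$ and are joined by no path of length at most $g-2$. Such a pair exists because there are $\Theta(k^2)$ pairs of low-degree vertices in that part but only $O_g(k)$ pairs joined by a short path (by the degree bound), and placing the new vertex in the opposite part preserves bipartiteness while the distance condition preserves the girth. To repair your argument for $k>t+|E(H_0)|$ you would need an insertion step of exactly this kind --- a new vertex joined to two far-apart vertices in the \emph{same} part --- since no variant of edge subdivision can keep the edge count at $2(k-t)$; at that point you have essentially reproduced the paper's argument, and the seed graph $H_0$ becomes unnecessary.
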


\begin{proof}
We claim that starting with an independent set of size $t=t(g)$ (to be chosen later),
we can repeatedly add vertices so that each $k$-vertex graph in the sequence
is exactly-$(2,t)$-degenerate, bipartite, of girth at least $g$, and in addition satisfies the following two conditions:
$(i)$ it has maximum degree at most $8$ and $(ii)$ it has a bipartition into two set of sizes $\lceil k/2 \rceil$ and $\lfloor k/2 \rfloor$.
The initial independent set under a balanced bipartition clearly satisfies these two conditions, so let us show how to add a vertex and maintain them.
Suppose the graph has $k-1$ vertices and bipartition into sets $A,B$ satisfying $|A| \leq |B|$.
Since it has maximum degree at most $8$, it contains $O(k)$ pairs of vertices connected by a path of length at most $g-2$.
Since the average degree of the vertices in $B$ is less than $4$, at least half the vertices have degree at most $7$. Hence, at least $\binom{(k-1)/4}{2}\geq \frac{k^2}{50}$ of the pairs of vertices in $B$ both have degree at most $7$.
Assuming $t$ is large enough so that $k\geq t$ satisfies $\frac{k^2}{50} -O(k) > 1$, we thus have a pair of vertices $u,v \in B$ so that both of them have degree at most $7$ and there is no path of length at most $g-2$ connecting them. Hence, we can add a new vertex to $A$ and connect it to $u$ and $v$.
\end{proof}

\medskip

\noindent{\bf Acknowledgement:} We would like to thank David Conlon for useful discussions.

\medskip


\begin{thebibliography}{99}


\bibitem{AKS}
N. Alon, M. Krivelevich, and B. Sudakov. Tur\'an numbers of bipartite graphs and related
Ramsey-type questions, Comb. Probab. Comput 12 (2003), 477–494.


\bibitem{BES1}
W.~G.~Brown, P. Erd\H{o}s and V.T. S\'{o}s, Some extremal problems on $r$-graphs, New Directions in
the Theory of Graphs, Proc. 3rd Ann Arbor Conference on Graph Theorey, Academic Press, New York, 1973, 55-63.		

\bibitem{BES} W.~Brown, P.~Erd\H{o}s and V.~S\'os. On the existence of triangulated spheres in 3-graphs,
and related problems, Periodica Mathematica Hungarica, 3(3--4) (1973), 221-228.


\bibitem{Bukh}
B. Bukh, Extremal graphs without exponentially-small bicliques, manuscipt 2022.

\bibitem{Conlon}
D. Conlon, private commuication, 2022.

\bibitem{ConlonJanzerLee} D. Conlon, O. Janzer and J. Lee, More on the extremal number of subdivisions, Combinatorica 41 (2021), 465-494.

\bibitem{CL}
D. Conlon and J. Lee, On the extremal number of subdivisions, Int. Math. Res. Not. 2021, 9122-9145.

\bibitem{CGLS}
D.~Conlon, L.~Gishboliner, Y.~Levanzov and A.~Shapira, A new bound for the Brown--Erd\H{o}s--S\'os problem, J. Combin. Theory Ser. B. 158 (2023), 1-35.

\bibitem{Erdos}
P. Erd\H{o}s, Some recent results on extremal problems in graph theory. Results, Theory
of Graphs (Internat. Sympos., Rome, 1966), pages 117–123, 1967.

\bibitem{Furedi}
Z. F\"uredi, On a Tur\'an type problem of Erd\H{o}s, Combinatorica 11 (1991), 75–79.

\bibitem{Gowers}
W. T. Gowers, Hypergraph regularity and the multidimensional Szemer\'edi theorem, Ann. of
Math. 166 (2007), 897–946.

\bibitem{GowersLong}
W. T. Gowers and J. Long, The length of an $s$-increasing sequence of $r$-tuples, Combin. Probab.
Comput. 30 (2021), 686–721.

\bibitem{GrzesikJanzerNagy}
A. Grzesik, O. Janzer and Z. L. Nagy, The Tur\'an number of blow-ups of trees, J. Combin. Theory Ser. B. 156 (2022), 299-309.


\bibitem{Janzer1}
O. Janzer,  The extremal number of the subdivisions of the complete bipartite graph, SIAM J. Discrete Math. 34 (2020), 241-250.


\bibitem{Janzer}
O. Janzer, Disproof of a conjecture of Erd\H{o}s and Simonovits on the Tur\'an number of graphs with minimum degree $3$, Int. Math. Res. Not., to appear.

\bibitem{KST}
T. K\H{o}v\'ari, V. T. S\'os and P. Tur\'an, On a problem of
K. Zarankiewicz, Colloquium Math. 3 (1954), 50-57.


\bibitem{NRS}
B. Nagle, V. R\"odl and M. Schacht, The counting lemma for regular $k$-uniform hypergraphs,
Random Structures Algorithms 28 (2006), 113–179.


\bibitem{Roth}
K.F. Roth, On certain sets of integers, J. London Math. Soc. 28 (1953), 104-109.

\bibitem{RS1}
V. R\"odl and J. Skokan, Regularity lemma for $k$-uniform hypergraphs, Random Structures
Algorithms 25 (2004), 1–42.

\bibitem{RS2}
V. R\"odl and J. Skokan, Applications of the regularity lemma for uniform hypergraphs, Random
Structures Algorithms 28 (2006), 180–194.

\bibitem{RSz}
I.~Ruzsa and E.~Szemer\'edi, Triple systems with no six points
carrying three triangles, in Combinatorics (Keszthely, 1976), Coll.
Math. Soc. J. Bolyai 18, Volume II, 939-945.

\bibitem{Sarkozy_Selkow}
G.~N.~S\'{a}rk\"{o}zy and S.~Selkow, An extension of the Ruzsa-Szemer\'{e}di theorem, Combinatorica 25 (2004), 77-84.


\bibitem{SolymosiSolymosi}
D.~Solymosi and J.~Solymosi, Small cores in $3$-uniform hypergraphs, J. Combin. Theory Ser. B. 122 (2017), 897-910.




\bibitem{Sz} E.~Szemer\'edi,
Regular partitions of graphs, In: {\em Proc.\ Colloque Inter.\
CNRS}, 1978, 399-401.

\bibitem{ST}
B. Sudakov and I. Tomon, Tur\'an number of bipartite graphs with no $K_{t,t}$, Proc. Amer. Math.
Soc. 148 (2020), 2811-2818.


\bibitem{Tao}
T. Tao, A variant of the hypergraph removal lemma, J. Combin. Theory Ser. A 113 (2006),
1257–1280.


\end{thebibliography}
\end{document}